\documentclass[11pt]{article}
\usepackage[T1]{fontenc}
\usepackage{a4wide}
\usepackage{lipsum}

\title{Facial diagrams and cycle double cover}

\author{
Babak Ghanbari\thanks{Computer Science Institute of Charles University, Prague, Czech Republic.  E-mail: {\tt babak@iuuk.mff.cuni.cz}. Supported by the project GAUK182623 of the Charles University Grant Agency.}
\and
Robert Šámal\thanks{Computer Science Institute of Charles University,  Prague, Czech Republic. E-mail: {\tt samal@iuuk.mff.cuni.cz}. 
Supported by grant 25-16627S of the Czech Science Foundation.}
}

\makeatletter

\newcommand{\shorttitle}{\@title}

\def\@maketitle{%
  \newpage
  \begin{center}%
  \let \footnote \thanks
    {\small Proceedings of the 13th European Conference on Combinatorics, Graph Theory and Applications\\ EUROCOMB'25\\
    Budapest, August 25 - 29, 2025
    }
    \vskip 0.5em
    \rule{\linewidth}{0.04cm}
    \vskip 3.5em
    {\LARGE \textbf{\textsc{\@title}} \par}%
    \vskip 1.5em
    {\textbf{\textsc{(Extended abstract)}} \par}
    \vskip 2.5em%
    {\large
      \lineskip .5em%
      \begin{tabular}[t]{c}%
        \@author
      \end{tabular}\par}%
  \end{center}%
  \par
  }
\makeatother

\usepackage{graphicx}
\usepackage{amsthm}
\usepackage{amssymb}
\usepackage{amsmath}
\usepackage{xcolor}
\usepackage{todonotes}
\usepackage{xfrac}

\theoremstyle{plain} 
\newtheorem{theorem}{Theorem}
\newtheorem{lemma}{Lemma}
\newtheorem{proposition}{Proposition}
\newtheorem{observation}{Observation}

\newtheorem{conjecture}{Conjecture}

\numberwithin{theorem}{section}
\numberwithin{definition}{section}
\numberwithin{lemma}{section}
\numberwithin{corollary}{section}
\numberwithin{proposition}{section}
\usepackage{cite}
\usepackage{bbm}

\begin{document}

\thispagestyle{empty}
\maketitle

\begin{abstract}
We approach the cycle double cover conjecture by looking for a circular 2-cell embedding of cubic graphs on an arbitrary surface. 
It is easy to see that if such an embedding exists, we can get to it from an arbitrary starting 2-cell embedding by repeating ``twists of an edge''. 
We study this twisting operation in detail and deduce bounds on the number of singular edges (edges where a face meets itself). 
\end{abstract}

\section{Introduction}
The cycle double cover conjecture claims that for every bridgeless graph there is a collection of cycles covering each edge exactly twice. 
See \cite{zhang_2012} for details and partial results. It is known that it is enough to show the conjecture for 3-regular graphs and that 
for such graphs it is equivalent to finding a circular 2-cell embedding on some surface. 
In other words, we are looking for an embedding such that the dual graph is loopless. Our goal in this paper is to 
represent the dual graph in a way that makes it easier to study the effects of a single twist operation. 

In an embedding of a graph $G$, an edge $e$ is called \emph{singular} if there exists a facial walk $F$ that traverses $e$ twice. Otherwise, $e$ is called \emph{regular}.
If a singular edge $e$ is traversed twice in the same direction by $F$ we call it \emph{good singular}, and if $e$ traversed twice in opposite directions by
$F$ we call it \emph{bad singular}. Note that in orientable embeddings all singular edges are bad singular. 
For graph embeddings, we follow the notation of~\cite{Mohar}, in particular we will use pair $(\pi, \lambda)$ to denote the combinatorial embedding of a graph. 

\section{Twist operation}

We study the operation on graph embeddings called the \textbf{twist} of an edge~$e$, i.e., the change of $\lambda(e)$. 
The following proposition shows the effect of twisting an edge  common to two faces, see Figure~\ref{fig:Twist-Join}. 
Our goal in this paper is to study this in more detail with the motivation by the Cycle Double Cover conjecture. 


\begin{proposition}\label{prop1}
Let $F_1$ and $F_2$ be distinct facial walks and let $e\in~E(F_1)~\cap~E(F_2)$ be regular. 
Then, by the twist of $e$, $F_1$ and $F_2$ will become one facial walk. 
\end{proposition}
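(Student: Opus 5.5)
We work directly with the face-tracing procedure for the combinatorial embedding $(\pi,\lambda)$ as in~\cite{Mohar}. A facial walk is traced by alternately following an edge and applying the local rotation $\pi_v$ or its inverse. Which one is used depends on the current ``side'' state, and this state flips each time we traverse an edge $f$ with $\lambda(f)=-1$. Twisting $e=uv$ changes only $\lambda(e)$. Hence every facial walk that does not use $e$ is traced exactly as before. The whole proof therefore reduces to comparing, locally at $e$, how the walks through $e$ are traced before and after the twist.

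\textbf{Step 1 (set-up).} Since $e$ is regular and lies on the distinct faces $F_1$ and $F_2$, each of them traverses $e$ exactly once (each edge is traversed twice in total by all faces). Write $F_1 = e\,W_1$ and $F_2 = e'\,W_2$, where:
\begin{itemize}
\item $W_1$ and $W_2$ are walks not containing $e$;
\item $e$ and $e'$ denote the two traversals of the edge.
\end{itemize}
Concretely, $F_1$ enters $e$ at the end $u$ from the edge $a=\pi_u^{-\epsilon}(e)$ and leaves at $v$ along $b=\pi_v^{\pm}(e)$, and similarly for $F_2$ with the other neighbours of $e$ in the rotations at $u$ and $v$. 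Each of $W_1$ and $W_2$ is a walk from one end of $e$ to the other. We record for each the side state in which it ends, relative to the state in which it starts.

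\textbf{Step 2 (effect of the twist).} After changing $\lambda(e)$, start tracing at $u$ with the same initial state and traverse $e$. Because the side flag now flips (or no longer flips) on $e$, we arrive at $v$ in the opposite state compared to before. We therefore continue along the other neighbour of $e$ at $v$, namely the one that $F_2$ used, rather than along $b$. Since the traversal rules on all other edges are unchanged, we next traverse $W_2$ (possibly in reverse, depending on the state). This brings us back to an end of $e$, and we traverse $e$ again. By the same flip argument we then continue along $W_1$, and finally close up at the starting dart. The resulting single closed walk is $e\,W_2^{\pm}\,e\,W_1^{\pm}$. It contains all the edge traversals of $F_1$ and $F_2$, so $F_1$ and $F_2$ merge into one face, while all other faces are unchanged.

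\textbf{Main obstacle.} The delicate part is the bookkeeping in Step 2. We must check that after following $W_2$ (possibly reversed) we really re-enter $e$ at the correct end and in the correct state to continue into $W_1$, rather than closing up early into a face that consists of $W_2$ alone. I would handle this by splitting into two cases, according to whether $F_1$ and $F_2$ traverse $e$ in the same or in opposite directions (equivalently, according to the parity of the signature along $W_2$ relative to $W_1$). In each case I would verify the four dart transitions at $u$ and $v$ explicitly. Regularity of $e$ is exactly what guarantees that the two local passes through $e$ belong to different walks, so that they are glued together rather than one face being split. As a consistency check, Euler's formula for the new embedding, with one face fewer, must correspond to the change of surface that the twist induces.
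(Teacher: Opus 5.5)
Your proposal is correct and is essentially the paper's argument. The paper justifies Proposition~\ref{prop1} only by the local picture in Figure~\ref{fig:Twist-Join}, where the twist cross-connects the two boundary strands of $F_1$ and $F_2$ running along $e$; your face-tracing argument is a combinatorial version of exactly that. The bookkeeping you flag as the main obstacle needs no case split: the twist changes only the successors of the two dart--state pairs leaving $u$ along $e$, and of the two leaving $v$, swapping them in each case. The two pairs at $u$ pass through the two different corners at $u$, so one lies on $F_1$ and the other on $F_2$. Hence the face-tracing permutation is composed with two transpositions, each exchanging elements of different cycles. This merges the orbit of $F_1$ with that of $F_2$ (and the reversed orbits likewise), so the traced walk cannot close up early.
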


\begin{figure}
    \centering
    \includegraphics[width=70mm,scale=0.5]{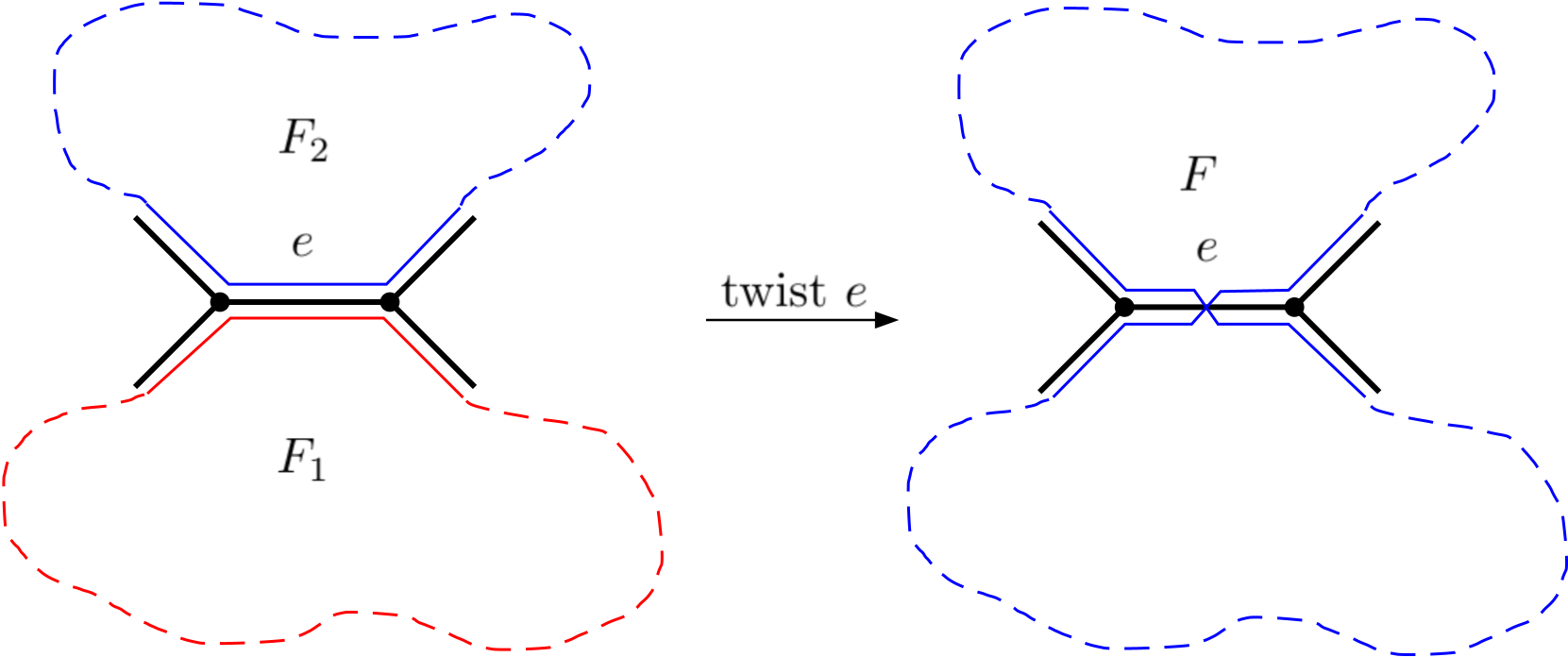}
    \caption{Twist of an edge $e$ between facial walks $F_1$ and $F_2$}
    \label{fig:Twist-Join}
\end{figure}

As mentioned in \cite{Mohar}, each local change of rotation around a vertex $v$ changes the signs of all  
three edges incident with $v$, which is the same as the twist of all three edges incident with $v$. Therefore, we have the following lemma.

\begin{lemma}\label{twist-lemma}
Let $(\pi,\lambda)$ be an embedding of a bridgeless cubic graph $G$ and let $(\pi', \lambda)$ be another embedding resulting from $(\pi, \lambda)$ by a sequence of local changes of rotations. Then, one can find an embedding $(\pi,\lambda')$ with a sequence of local twists of the edges in $(\pi, \lambda)$ such that $(\pi, \lambda')$ and $(\pi', \lambda)$ have the same set of facial walks.
\end{lemma}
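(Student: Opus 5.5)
The plan is to use the standard equivalence of embedding schemes under local changes (switches) at vertices, following \cite{Mohar}. A local change at $v$ replaces $\pi_v$ by its inverse $\pi_v^{-1}$ and flips $\lambda(e)$ for every edge $e$ incident with $v$ (a loop would be flipped twice, but $G$ is bridgeless cubic, so I will handle loops separately or note that none arise in the relevant setting). Such a change does not alter the set of facial walks: a facial walk traced through $v$ in one local orientation is traced in the reversed orientation at $v$, and the sign flips on the incident edges exactly compensate. This equivalence is the one fact from \cite{Mohar} I will take as given.

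Let the given sequence of local changes be performed at vertices $v_1,\dots,v_k$, with repetitions allowed. Since $\pi_v\mapsto\pi_v^{-1}$ is an involution, only the parity of how often each vertex occurs matters. Let $U$ be the set of vertices used an odd number of times. Then $\pi'_v=\pi_v^{-1}$ for $v\in U$ and $\pi'_v=\pi_v$ otherwise. Applying the local changes at the vertices of $U$ to $(\pi,\lambda)$ gives exactly $(\pi',\lambda'')$, where $\lambda''(e)=\lambda(e)$ multiplied by $(-1)$ raised to the number of endpoints of $e$ in $U$. Note that $(\pi',\lambda'')$, not $(\pi',\lambda)$, is what the local changes actually produce.

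Next I will run this in reverse, starting from $(\pi',\lambda)$. Applying local changes at every vertex of $U$ to $(\pi',\lambda)$ returns the rotation system to $\pi$, and the signature becomes $\lambda'(e)=\lambda(e)\cdot(-1)^{|e\cap U|}$. By the equivalence above, $(\pi,\lambda')$ has the same facial walks as $(\pi',\lambda)$. Moreover, $\lambda'$ differs from $\lambda$ exactly on $\delta(U)$, the set of edges with exactly one endpoint in $U$. So $(\pi,\lambda')$ is obtained from $(\pi,\lambda)$ by twisting each edge of $\delta(U)$ once, which is the required sequence of twists. Equivalently, at each $v\in U$ we twist its three incident edges, and an edge inside $U$ is twisted twice, which cancels.

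The only delicate point is to be careful about which signature accompanies $\pi'$. In the lemma the pair $(\pi',\lambda)$ is just a scheme with the old signature, so the argument must go through the reverse direction as above rather than forward from $(\pi,\lambda)$. Once that bookkeeping is set up correctly, the remaining steps are routine.
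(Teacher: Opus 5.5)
Your proof is correct and is the paper's argument: by the equivalence from \cite{Mohar}, reversing $\pi_v$ while keeping $\lambda$ gives the same facial walks as keeping $\pi_v$ and twisting the three edges at $v$, and your parity bookkeeping (reducing everything to twisting each edge of $\delta(U)$ once) just makes the composition of these steps explicit. Your hedge about loops is unnecessary: a loop at a vertex of a cubic graph would make the remaining edge at that vertex a bridge, so none can occur.
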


In other words, one can start from any embedding in the graph and find all the other embeddings of the graph by twisting the edges. 
So, if the CDC conjecture is true, then given any embedding $(\pi,\lambda)$ of a bridgeless cubic graph $G$, we can always find some (actually, every) CDC by a
sequence of twist operations on $(\pi,\lambda)$.

\section{Facial diagram}
In this section, we introduce another representation of facial walks that helps us in our study of the twist operation; we call it \emph{facial diagram}.
Let $(\pi, \lambda)$ be an embedding for a bridgeless cubic graph $G$ with facial walks $F_1, F_2, \dots, F_k$ where
$F_i = \{(v^i_0, e^i_1, v^i_1, e^i_2, \dots, v^i_{t-1}, e^i_t)\}$.
A \emph{facial diagram}, or \emph{FD} for $G$ is a cubic graph $H$ with its vertices (here we say node) and edges defined as follows.
$H$ has nodes $V(H) = \{e^i_j: e^i_j \in F_i\}$. Since an embedding covers every edge of $G$ twice, we have $|V(H)| = 2|E(G)|$. $H$ has the following three types of edges. (See Figures~\ref{fig:FD} and~\ref{fig:Example-FD}.) 
\begin{enumerate}
    \item \textbf{Facial link}: $v^i_j = (e^i_j, e^i_{j+1})$ where $e^i_j, v^i_j, e^i_{j+1} \in F_i$. These are exactly the copies of the vertices of $G$ that appear on the facial walks of the embedding.
    \item \textbf{Singular link}: $(e^i_j, e^i_k)$ where 
    $e^i_j, e^i_k \in F_i$
    and $e^i_j$ and $e^i_k$ are the same edge in $G$ (but different nodes in $H$). 
    \item \textbf{Regular link}: $(e^i_k, e^j_l)$ where $e^i_k = e^j_l$ as edges of $G$, 
    $e^i_k \in F_i$, and $e^j_l \in F_j$.
\end{enumerate}
Since there is a 1-1 correspondence between the set of edges of $G$ and the set of singular and regular links of $H$, in the rest of the paper, if there is no confusion, we simply write a singular link $(e^i_j, e^i_k) \in F_i$ as $e_j$ (its label in E(G)) and a regular link $(e^i_k, e^j_l)$ between $F_i$ and $F_j$ as $e_k$ (corresponding edge label in $G$); See Figure~\ref{fig:Example-FD} as an example. We say that two singular links $e_1$ and $e_2$ are \emph{crossing}, if they appear as $(\dots, e_1,\dots, e_2, \dots, e_1, \dots, e_2, \dots)$ in a facial walk $F$ of the facial diagram.

\begin{figure}
    \begin{center}
        \includegraphics[scale=0.18]{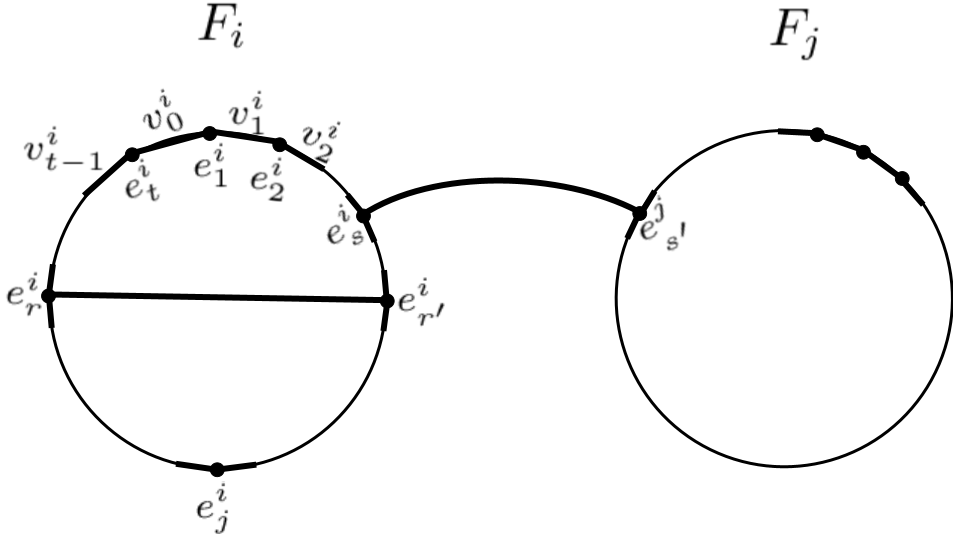}
    \end{center}
    \caption{Representation of facial walks $F_i$ and $F_j$ in the facial diagram of a graph $G$ with a singular link $(e^i_r, e^i_{r'})$ and a regular link $(e^i_s, e^j_{s'})$. } 
    \label{fig:FD}
\end{figure}

Next, we define a signature for every singular link in $H$. Let $e = (a, b)$ be a singular edge in a facial walk $F$. If $e$ is a bad singular edge i.e. $(\dots, a, e, b, \dots, b, e, a, \dots)\in F$, then we give the singular link $e$ in $H$ a $+$ sign and we call it a \emph{bad singular link}. Otherwise, if $e$ is a good singular edge i.e. $(\dots, a, e, b, \dots, a, e, b, \dots)\in F$, then we give the singular link $e$ in $H$ a $-$ sign and we call it a \emph{good singular link}. See the following pictures.
\begin{center}
    \includegraphics[scale=0.18]{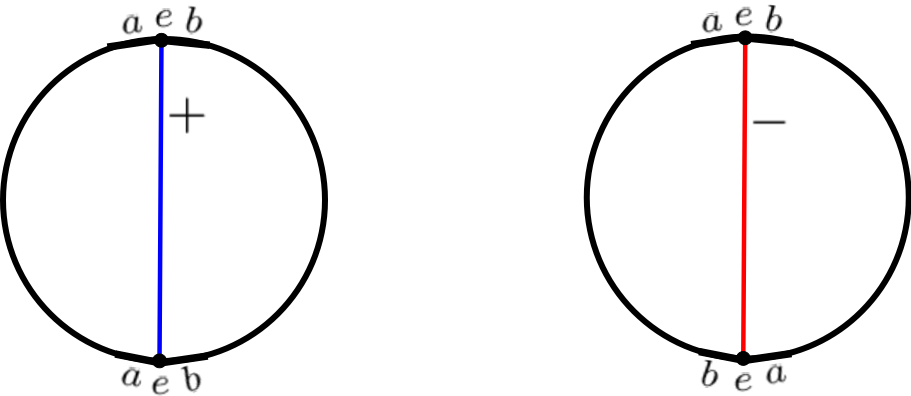}
\end{center}

\begin{figure}
    \begin{center}
        \includegraphics[scale=0.18]{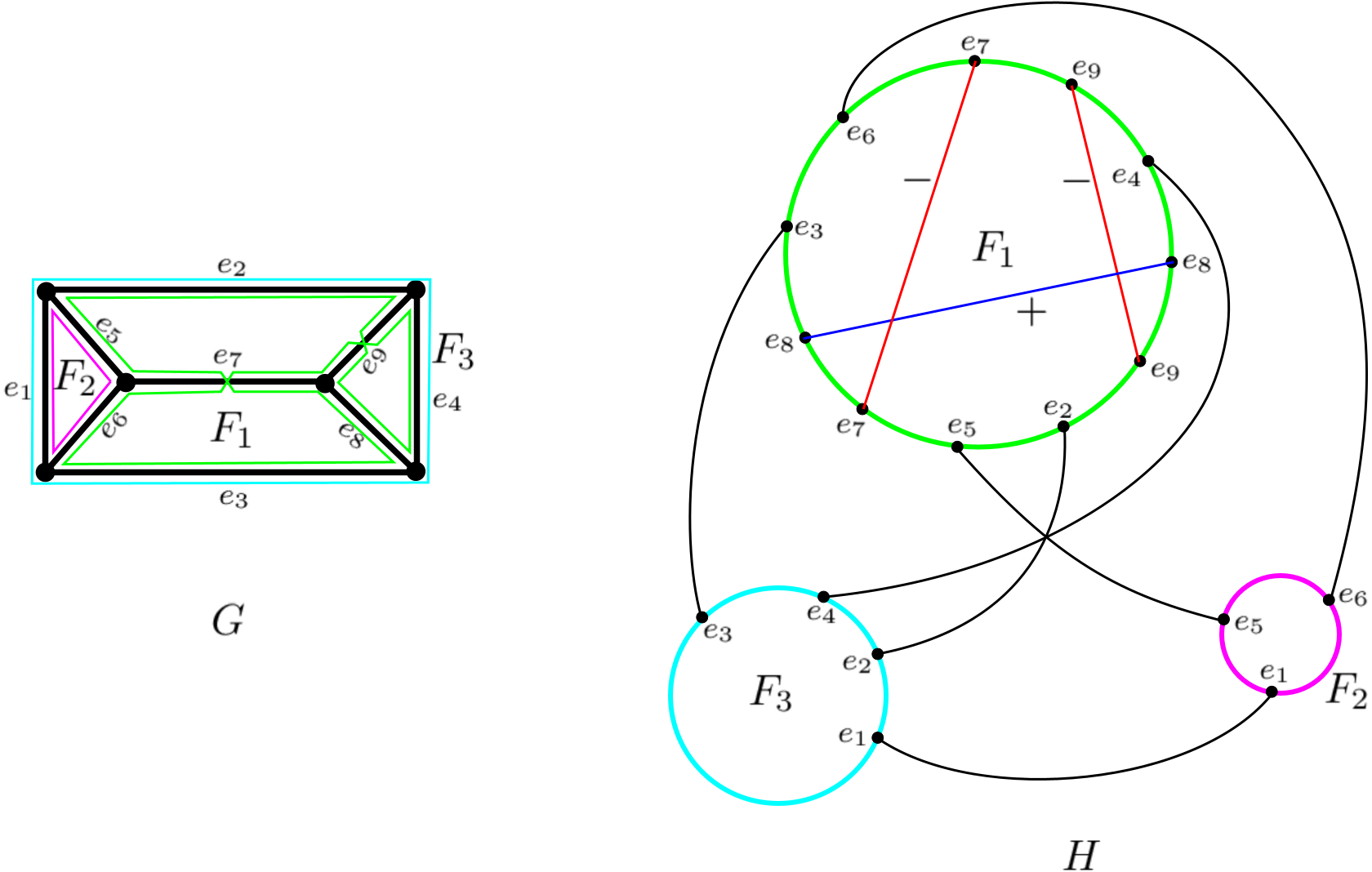}
    \end{center}
    \caption{The facial diagram $H$ of an embedding of a cubic graph $G$ where $e_8$ is a bad singular link and $e_7$ and $e_9$ are good singular links. For simplicity, facial links' labels of H and vertex labels of $G$ are omitted.}
    \label{fig:Example-FD}
\end{figure}

\begin{lemma}\label{crossing-lemma}
    Let $e_1$, and $e_2$ be two crossing singular links in a facial walk $F$.
    \begin{enumerate}
        \item If $e_1$ has $-$ sign. Then, by twist of $e_1$, both $e_1$, and $e_2$ turn to a regular link.
        \item If $e_1$ has $+$ sign. Then twist of $e_1$ changes the sign of $e_2$ while the sign of $e_1$ remains unchanged.
    \end{enumerate}
\end{lemma}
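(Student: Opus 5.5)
Since a twist of $e_1$ changes only $\lambda(e_1)$, every facial walk avoiding $e_1$ is unchanged. So the argument is local: I would write $F$ as a cyclic word, cut at the two occurrences of $e_1$, and describe how face-tracing through $e_1$ is rerouted.

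Write $F = (e_1^{\epsilon}\, A\, e_1^{\delta}\, B)$, where $A,B$ are the subwalks between the two traversals of $e_1$. Crossing means $e_2$ occurs exactly once in $A$ and once in $B$. Sign $-$ means $\delta=\epsilon$ (same direction); sign $+$ means $\delta=-\epsilon$. Twisting $e_1$ changes how a walk arriving at an end of $e_1$ continues after crossing it. I would use the standard rule (the same mechanism as in Proposition~\ref{prop1}, run in reverse for one face): regluing at a twisted edge exchanges which side of $e_1$ each traversal continues into. Concretely, $F$ is cut into the two arcs $A$ and $B$, and they are reassembled as follows.

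\emph{Case $-$.} The twist splits $F$ into two faces, $(e_1 A)$ and $(e_1 B^{-1})$, up to orientation. I would check this by following the darts: after the twist, leaving the end of $A$ through $e_1$ returns to the start of $A$ rather than entering $B$. This is the converse of Proposition~\ref{prop1}. Now $e_1$ lies on two distinct faces, each once, so it is regular. The two occurrences of $e_2$ were split between $A$ and $B$, so they now lie in different faces, and $e_2$ is regular as well.

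\emph{Case $+$.} Here the twist keeps one face, namely $F' = (e_1 A\, e_1 B^{-1})$ up to orientation: the segment $B$ is now traversed reversed. Then $e_1$ is still traversed twice. Its two traversals, one before $A$ and one before $B^{-1}$, now have directions fixed by the reversal, and a check shows they are still opposite, so the sign stays $+$. For $e_2$, the occurrence in $A$ keeps its direction while the occurrence in $B$ is reversed. Hence their relative direction flips, and the sign of $e_2$ changes. I would also note that $e_2$ is still traversed twice by the same face, so it stays singular.

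\emph{Main obstacle.} The hard step is justifying the rerouting rule rigorously in the $(\pi,\lambda)$ face-tracing formalism, and in particular tracking the orientation and the sign of $e_1$ itself in the $+$ case. A naive count might suggest $e_1$ flips. I would do this by a careful dart-by-dart trace at the two ends $a,b$ of $e_1$, recording the local signature state. I would also check the degenerate cases where $A$ or $B$ is short; they are fine since each contains $e_2$.
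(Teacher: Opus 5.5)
Your proposal is correct and follows essentially the same route as the paper, which gives no written proof and supports the lemma only with its figures for twisting a $-$ link and a $+$ link. In both, a $-$ twist splits $F$ into $(e_1A)$ and $(e_1B)$, which separates the two occurrences of any crossing link, while a $+$ twist keeps a single face but reverses the segment on one side of $e_1$, so $e_1$ stays opposite-directed and the relative direction of $e_2$ flips. The rerouting rule you single out as the main obstacle is exactly what those figures take for granted, and your dart-by-dart check settles it; so does the ribbon picture, in which the twist re-pairs the four corners of the band of $e_1$.
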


By Lemma~\ref{crossing-lemma}, if we twist a $-$ link the number of singular edges of the new corresponding embedding reduces at least by~$1$. If we twist a $+$ link, the number of singular links remains unchanged. However, after the twist of $e$, the order of nodes and links on one side of $e$ will change. We will use this property of the $+$ links to change the sign of some of the other singular links. See also Figure~\ref{fig:twist-positive}.

\begin{figure}
    \begin{center}
        \includegraphics[scale=.16]{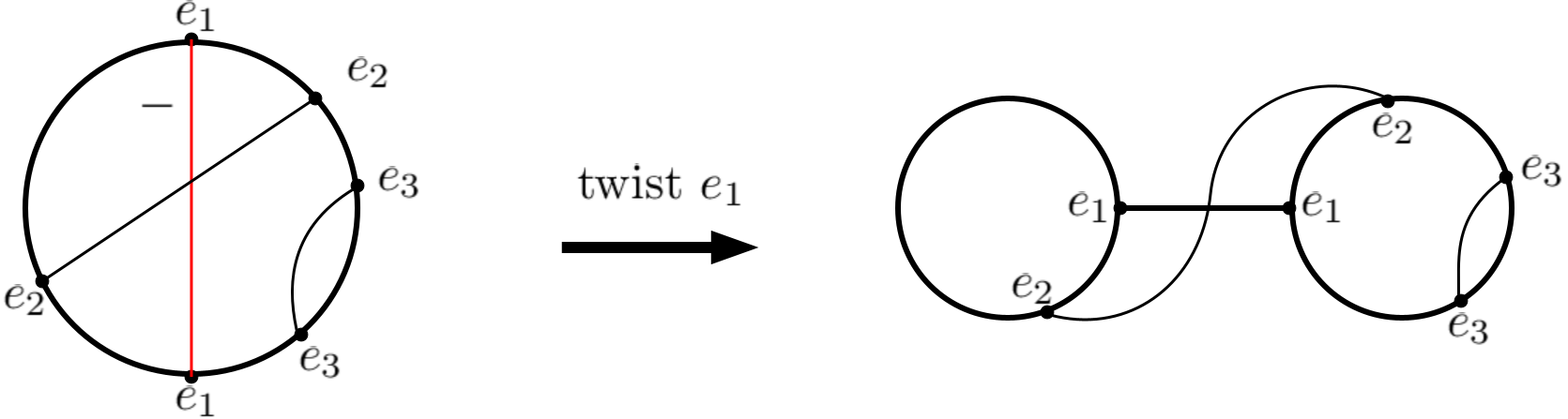}
    \end{center}
    \caption{Twist of a $-$ link $e_1$. After the twist, $e_2$ turns to regular but $e_3$ remains singular.}
    \label{fig:twist-negative}
\end{figure}

\begin{figure}
    \begin{center}
        \includegraphics[scale=.16]{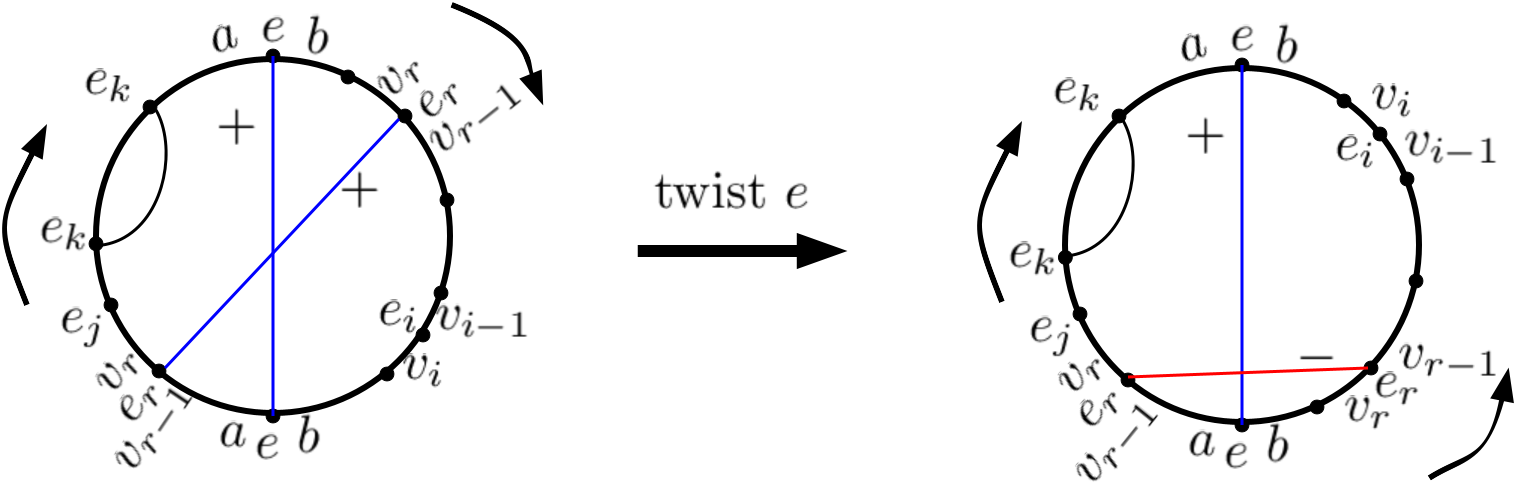}
    \end{center}
    \caption{Twist of a $+$ link $e$. Observe how the order of nodes and facial links on one side of $e$ changes. As a result sign of $e_r$ changes to $-$ and then twist of $e_r$ changes both $e$ and $e_r$ to regular edges.}
    \label{fig:twist-positive}
\end{figure}

\subsection{Properties of the facial diagram}
The facial diagram has the following nice properties. We include their proofs in the full paper. Let $G$ be a bridgeless cubic graph,  $F$ be a facial walk in an embedding of $G$, and $H$ be a facial diagram of the same embedding. Let $e_1 = (a,b)$, $e_2 = (b,c)$, and $e_3 = (b, d) \in E(F)$. Then, 


\begin{enumerate}
    \item  $(\dots, a, e_1, b, e_2, c, \dots, a, e_1, b, e_2, c, \dots) \notin F$.
    \item $(\dots, a, e_1, b, e_2, c, \dots, c, e_2, b, e_1, a, \dots) \notin F$.
    \smash{\raisebox{-1cm}[0mm][0mm]{\hbox to 0pt{\includegraphics[width=0.45\textwidth]{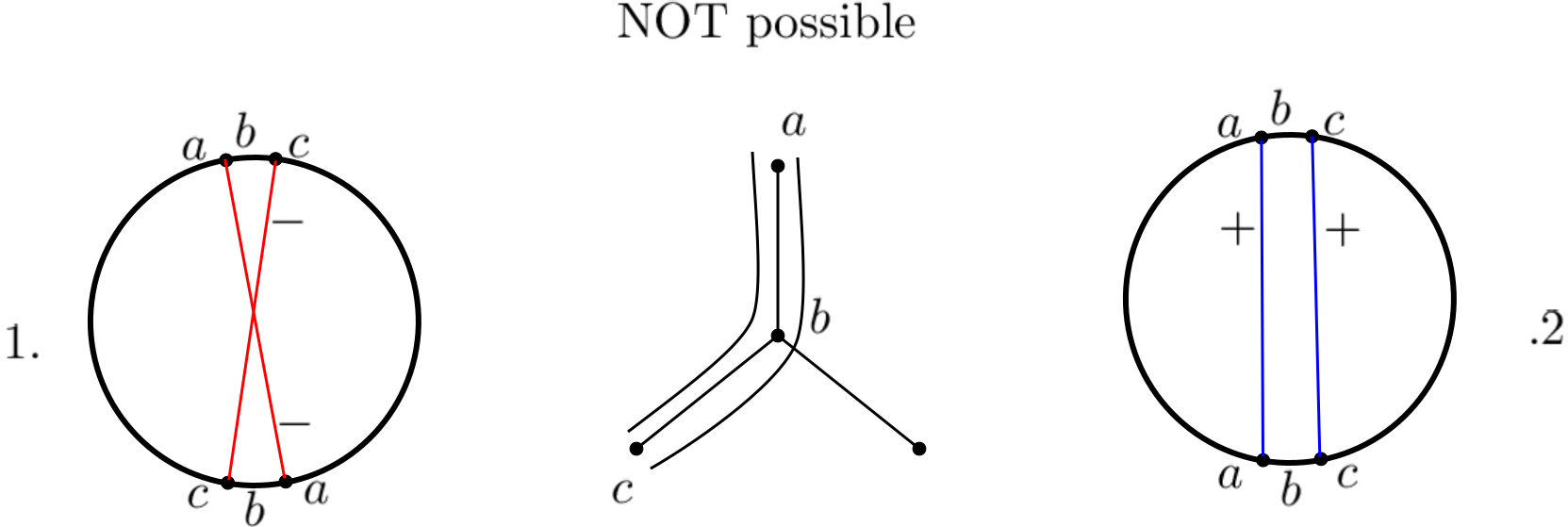}}}}
    \vspace{10mm}


    \item Suppose that $e_1$ and $e_2$ are singular and have sign $-$. If 
        $(\dots, a, e_1, b, e_2, c, \dots) \in F$ 
        then,~either 
        $(\dots, a, e_1, b, e_2, c, \dots, a, e_1, b, e_3, d, \dots, d, e_3, b, e_2, c, \dots) \in F,$
        or \newline$(\dots, a, e_1, b, e_2, c, \dots, d, e_3, b, e_2, c,\dots, a, e_1, b, e_3, d, \dots) \in F$

    \item Suppose that $e_1$ and $e_2$ are singular and have sign $+$. If 
        $\{\dots, a, e_1, b, e_2, c, \dots\} \in F$
        then,~either
        $\{\dots, a, e_1, b, e_2, c, \dots, c, e_2, b, e_3, d, \dots, d, e_3, b, e_1, a, \dots\} \in F,$ or\newline
        $\{\dots, a, e_1, b, e_2, c, \dots, d, e_3, b, e_1, a,\dots, c, e_2, b, e_3, d, \dots\} \in F$

        \begin{center}
            \includegraphics[scale=0.16]{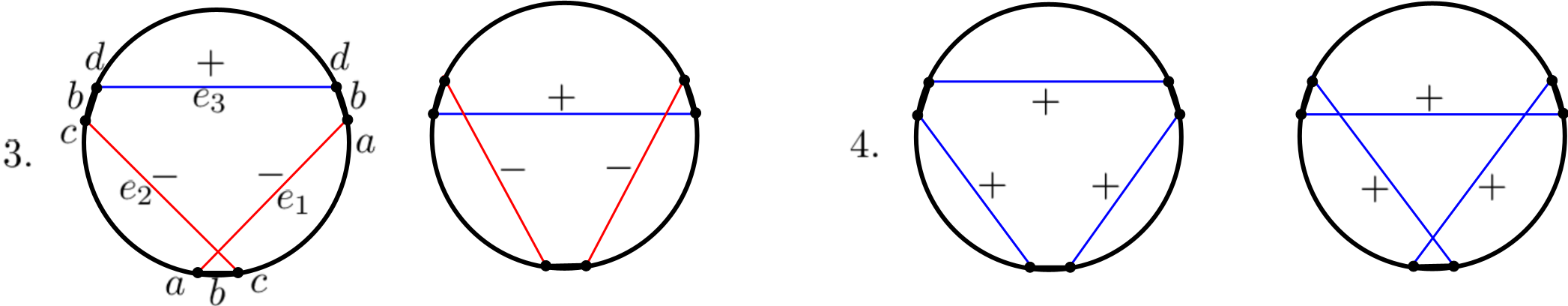} 
        \end{center}
    \item We call a vertex $v$ \emph{saturated} if all the edges incident with $v$ are singular. 
      The number of $+$ edges incident with a saturated vertex is either~$1$ or~$3$. 
    \item Consider an embedding of a bridgeless cubic graph $G$ with the minimum number of 
        singular edges. The facial diagram of this embedding has no crossing links. 
    \item Let $e \in E(F_1) \cap E(F_2)$ be a regular link in a facial diagram 
        $H$, If we twist $e$, the number of singular links in the new facial diagram $H'$ is the number of singular links in $H$ plus $|E(F_1) \cap E(F_2)|$.
    \item Let $e$ be a regular link. Then twist of $e$ is a good singular link
        and therefore has sign $-$ in the facial diagram. 
    \item Twist of a regular link $e$ does not change the sign of any singular links.
\end{enumerate}
\section{Random embedding}
In this section, we approximate the number of singular edges by using random embeddings. Let $(\pi, \lambda)$ be a random embedding of a cubic graph $G$. 
Results of several computer experiments counting the number of singular edges in a random embedding of a cubic graph indicate that the expected number of bad singular edges is $\frac{m}{3}$, where $m$ is the number of edges in $G$. 
Also, in expectation, this number seems to be the same for good singular edges and regular edges. Therefore, we propose the following conjecture.  

\begin{conjecture}\label{rnd-emb-conj}
In a random embedding of a bridgeless cubic graph $G$,
    \begin{enumerate}
        \item The expected number of bad singular edges is $\frac{m}{3}$.
        \item The expected number of good singular edges is $\frac{m}{3}$.
        \item The expected number of regular edges is $\frac{m}{3}$.
    \end{enumerate}   
\end{conjecture}

As an application, we have the following theorem.

\begin{theorem}
    Suppose that Conjecture~\ref{rnd-emb-conj} is true and let $G$ be a bridgeless cubic graph. Then, there exists an embedding of $G$ with at most $\frac{m}{3}$ singular edges where $m = |E(G)|$.
\end{theorem}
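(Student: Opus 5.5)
The plan is to combine an averaging argument with a deterministic clean-up procedure. The clean-up removes all good singular edges without ever increasing the number of bad singular edges.

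\emph{Step 1 (averaging).} By part~1 of Conjecture~\ref{rnd-emb-conj}, the expected number of bad singular edges in a random embedding of $G$ is $\frac{m}{3}$. Hence there is an embedding $(\pi,\lambda)$ of $G$ with at most $\frac{m}{3}$ bad singular edges. It may still have many good singular edges, so we cannot stop here. (Using parts~1 and~2 together would only give the weaker bound $\frac{2m}{3}$ on all singular edges.)

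\emph{Step 2 (removing $-$ links).} As long as the facial diagram contains a $-$ link $e$, we twist $e$. Two facts are needed about such a twist.
\begin{enumerate}
\item The twist makes $e$ regular, does not turn any regular edge into a singular one, and so decreases the number of singular links by at least one. The paper states this right after Lemma~\ref{crossing-lemma}.
\item The twist does not change the sign of any link that remains singular.
\end{enumerate}
For both facts I would use that a twist is an involution. Twisting $e$ back takes the new embedding, in which $e$ is regular, to the old one. By Properties~8 and~9, twisting a regular edge makes it a $-$ link and leaves the signs of all other singular links unchanged. Reading this backwards, twisting a $-$ link $e$ back to regular also preserves the signs of every other link that stays singular. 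The only other changes are that some singular links become regular. This is exactly the behaviour described in Lemma~\ref{crossing-lemma}(1) and in Figure~\ref{fig:twist-negative}: one side of the face gets split off, $e_2$ becomes regular and $e_3$ stays singular. In particular no new $+$ link is ever created.

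\emph{Step 3 (termination and count).} Each twist strictly decreases the number of singular links, so the process terminates. At the end there are no $-$ links, so every singular edge is bad singular. Since the set of $+$ links only shrinks during the process, the final embedding has at most as many singular edges as $(\pi,\lambda)$ had bad singular edges, which is at most $\frac{m}{3}$.

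The main obstacle is the sign-preservation claim in Step~2. Properties~8 and~9 are stated for twisting a regular edge shared by two distinct faces $F_1,F_2$. To run them backwards, I must check that twisting a $-$ link $e$ on a single face $F$ really splits $F$ into two faces that share $e$, so that $e$ becomes regular and the reversed twist falls under Properties~8 and~9. I would verify this directly from the walk pattern $(\dots,a,e,b,\dots,a,e,b,\dots)$. Cutting $F$ at the two occurrences of $e$ and reversing the segment between them gives two closed walks, each using $e$ once. Singular links lying entirely within one segment keep their relative traversal directions, because a reversal applied to both occurrences preserves the relation between them. Links with one occurrence in each segment become regular. This would confirm the sign claim directly, and I would not need to rely on the involution argument alone.
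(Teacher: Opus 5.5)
Your proposal is correct and follows the paper's proof: part 1 of the conjecture gives an embedding with at most $\frac{m}{3}$ $+$ links, and repeatedly twisting $-$ links (Lemma~\ref{crossing-lemma}(1)) removes all $-$ links, never creates a $+$ link, and terminates because each twist strictly decreases the number of singular links. The paper only asserts that these twists do not increase the number of $+$ links, so your direct check supplies a step the paper leaves implicit. In that check, $F=(\dots,a,e,b,P,a,e,b,Q)$ splits into the closed walks $(a,e,b,P)$ and $(a,e,b,Q)$ with no segment reversed (reversal is what happens for a $+$ link); hence links inside $P$ or inside $Q$ keep their signs and links crossing $e$ become regular.
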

\begin{proof}
    If conjecture~\ref{rnd-emb-conj} holds, then there exists an embedding and therefore a facial diagram where the number of singular links with sign $+$ is at most $\frac{m}{3}$. If every other link is regular, there is nothing to prove and the number of singular links (and therefore singular edges of the embedding) is at most $\frac{m}{3}$. Otherwise, start with any $-$ link $e$ and apply Lemma~\ref{crossing-lemma} to the facial diagram. As a result, $e$ and every link crossing with $e$ changes to a regular link in the new facial diagram. Since it does not increase the number of $+$ links, one can repeat this process until there are no $-$ links without increasing the number of $+$ links. Therefore, there exists a facial diagram with at most $\frac{m}{3}$ bad singular links.   
\end{proof}

A referee suggested to us that the above result actually has a simpler proof: take any perfect matching, and extend the complementary 2-factor into a 
facial double cover. Then, only the edges of the matching can be singular (see \cite{Ghanbari}).

%
%
\bibliographystyle{alpha}
\bibliography{RN.bib}

\end{document}